\newtheorem{theorem}{Theorem}[section]
\newtheorem{lemma}[theorem]{Lemma}
\newtheorem{corollary}[theorem]{Corollary}
\newtheorem{definition}[theorem]{Definition}
\newtheorem{remark}[theorem]{Remark}
\def \outlineby #1#2#3{\vbox{\hrule\hbox{\vrule\kern #1%
\vbox{\kern #2 #3\kern #2}\kern #1\vrule}\hrule}}%
\def \endbox {\outlineby{4pt}{4pt}{}}%
\newenvironment{proof}
{\vspace{5pt}\noindent{\bf Proof\ }}{{\hfill \endbox
}\par\vskip2\parsep}
\newenvironment{eg}
{\vspace{5pt}\noindent{\bf Example\ }}{{\hfill \endbox
}\par\vskip2\parsep}
\newcommand{\var}{{\rm{Var}}}
\newcommand{\cov}{{\rm{Cov\;}}}
\newcommand{\ep}{{\mathbb {E}}}
\newcommand{\pr}{{\mathbb {P}}}
\newcommand{\re}{{\mathbb {R}}}
\newcommand{\Z}{{\mathbb {Z}}}
\newcommand{\bdiy}{\begin{displaystyle}}
\newcommand{\ediy}{\end{displaystyle}}
\newcommand{\Po}{{\rm{Po}}}
\newcommand{\blah}[1]{}
\newcommand{\lf}{\lfloor}
\newcommand{\rf}{\rfloor}
\newcommand{\ulc}[1]{{\mbox{ULC}}(#1)}
\newcommand{\ulci}{\ulc{\infty}}
\journal{Statistics and Probability Letters}
\begin{document}
\begin{frontmatter}

\title{Bounds on the Poincar\'{e} constant under negative dependence}

\author[himr]{Fraser Daly\fnref{fn1}}
\ead{fraser.daly@bristol.ac.uk}
\author[uob]{Oliver Johnson}
\ead{o.johnson@bristol.ac.uk}

\fntext[fn1]{Corresponding author.  Tel: +44 (0)117 954 5667, Fax: +44 (0)117 928 7999.}

\address[himr]{Heilbronn Institute for Mathematical Research,
 Department of Mathematics, University of Bristol, University Walk, Bristol BS8 1TW, UK.} 
\address[uob]{Department of Mathematics, University of Bristol, University Walk, Bristol BS8 1TW, UK.}

\begin{abstract}
We give bounds on  the Poincar\'e (inverse spectral gap) constant of
a non--negative, integer--valued random variable $W$,
 under negative dependence assumptions  such as ultra log--concavity and total negative dependence.
We show that the bounds obtained compare well to others in the literature.
Examples treated include some occupancy and urn models, a random graph model and small spacings
on the circumference of a circle.   
 Applications to Poisson convergence theorems are considered. 
\end{abstract}

\begin{keyword}
Poincar\'{e} constant \sep Poisson distribution \sep total negative dependence \sep ultra log-concavity \sep 
size--bias transform \sep stochastic ordering
\MSC[2010]{Primary 60E15; Secondary 62E10} 
\end{keyword}

\end{frontmatter}

\section{Introduction}

Throughout this note we let $W$ be a random variable supported on (a subset of) $\Z ^+=\{0,1,\ldots\}$ and let $\Delta$ be the forward difference operator, so that
for any function $g:\Z ^+\mapsto\re$, $\Delta g(k)=g(k+1)-g(k)$.  The main object we wish to consider here is the (discrete) Poincar\'e constant,
given by
$$
R_W = \sup_{g\in\mathcal{G}(W)} \left\{ \frac{\ep[g(W)^2]}{\ep[\Delta g(W)^2]}  \right\}   \,,
$$
where the supremum is taken over the set
$$
\mathcal{G}(W) = \{g:\Z ^+\mapsto\re\mbox{ with } \ep[g(W)^2]<\infty \mbox{ and }\ep[g(W)]=0 \}\,.
$$

In this note we give an explicit upper bound on $R_W$ when $W$ satisfies a negative dependence assumption.
Such a bound can be used, for example, to establish Poisson convergence results, as we shall see below.
In the examples we consider in Section \ref{sec:egs} we shall see that our upper bound is easily
calculated, and often of the same order as the trivial lower bound we state in Lemma \ref{lem:lower}. 

Our work can be understood in the context of size--biasing \citep[see, for example,][]{bhj} and stochastic
ordering \citep[see, for example,][]{shaked}.
For any non--negative, integer--valued random variable $W$ with mean
$\ep W = \lambda >0$, we let $W^\star$ have the $W$--size--biased distribution, given by
\begin{equation} \label{eq:sizebias}
P(W^\star=j) = \frac{jP(W=j)}{\lambda}\,,  \mbox{\;\;\; for $j=1,2,\ldots$. }
\end{equation}
Equivalently, $W^*$ can be defined by requiring that
\begin{equation} \label{eq:altsizebias}
\ep [ W g(W)] = \lambda \ep g(W^*),
\end{equation}
for all functions $g$ for which the expectation of the left hand side exists.

Further we let $\leq_{st}$ denote the usual stochastic ordering, so that 
$X\leq_{st}Y$ if $\ep f(X)\leq\ep f(Y)$ for all increasing functions $f$. In this paper we shall consider
random variables $W$ under the assumption that $W^* \leq_{st} W+Z$ for some $Z$, the sharpness
of which can be judged from the fact that $W \leq_{st} W^*$ always.
For example, we obtain the following bound on the Poincar\'e constant:
\begin{theorem} 
\label{thm:tnd}
Let $W$ be a non--negative, integer--valued random variable with mean $\lambda$.  Suppose that 
$W^\star\leq_{st}W+1$.  Then 
\begin{equation} R_W\leq \lambda. \label{eq:rmean} \end{equation}
\end{theorem}
This theorem is implied by a stronger result, Theorem \ref{thm:ord}, which we prove in Section \ref{sec:proofs}.
In Section \ref{sec:negdep} we state several negative dependence concepts
 under which we have a finite Poincar\'e constant, and state the bounds on $R_W$ we
  obtain under these assumptions. In particular, in Corollary \ref{cor:tnd}
we show that (\ref{eq:rmean}) holds for $W$ the sum of totally negatively dependent (TND) Bernoulli
random variables, and in Corollary \ref{cor:ulc} we show (\ref{eq:rmean}) holds for $W$ ultra log-concave
of degree $\infty$.

The idea of proving such discrete Poincar\'{e} inequalities is not a new one,
with previous authors to consider the problem including
\citet{bobkov2}, \citet{gao}, \citet{klaasen}, \citet{miclo} and \citet{prakasa}.
Some comparison with the results of other authors is given in Section \ref{sec:egs}.
In this section, we also treat some examples for which bounds on the Poincar\'e constant based on 
other authors' work are not straightforward to calculate. 

The main aim of this work is to prove the new bound (Theorem \ref{thm:tnd}) on the Poincar\'{e}
constant for discrete random variables. Our approach enables us to make
new connections between the Poincar\'{e} constant and topics such as
stochastic ordering and various forms of negative dependence of random
variables. In addition, we gain a new perspective on Poisson approximation by
using our approach to give new bounds which are close
to the optimal ones.

Section \ref{sec:pois} shows how the bounds we derive may be used in proving Poisson 
convergence results.  We will assess closeness of 
 non--negative, integer--valued random variables $X$ and $Y$  using
the total variation distance 
$$
d_{TV}(\mathcal{L}(X),\mathcal{L}(Y)) = \sup_{A\subseteq\Z ^+} 
|P(X\in A) - P(Y\in A)|\,.
$$
We prove the following result in Section \ref{sec:pois}:
\begin{theorem}
\label{thm:bdtv} For any non-negative, integer--valued random variable $W$ with mean $\ep W = \lambda$ and
$\var(W)<\infty$, the total
variation distance between $W$ and a Poisson random variable with the same mean is
bounded by
\begin{equation} \label{eq:rbound}
d_{TV}(\mathcal{L}(W),\Po(\lambda)) \leq 
\frac{1 - e^{-\lambda}}{\lambda}\left\{ |\lambda -R_W| + \sqrt{R_W}\sqrt{R_W-\var(W)} \right\}\,.
\end{equation}
\end{theorem}
(Note that Lemma \ref{lem:lower} below states that $R_W \geq \var W$, so the resulting bound gives a real number).

Our results may also be used with several other probability metrics. For example, using Theorem 1.1 of 
\citet{barbourxia} we can bound the Wasserstein distance (replacing the Stein factor $\frac{1 - e^{-\lambda}}{\lambda}$
of Theorem \ref{thm:bdtv} by $1.1437/\sqrt{\lambda}$).

To illustrate Theorem \ref{thm:bdtv}, we combine it with Theorem \ref{thm:tnd} to give
\begin{equation}\label{eq:tv1}
d_{TV}(\mathcal{L}(W),\Po(\lambda)) \leq (1 - e^{-\lambda})\sqrt{1-\frac{\var(W)}{\lambda}}\,,
\end{equation}
for $W$ such that $W^\star\leq_{st}W+1$.  Note that such $W$ have $\var(W)<\lambda$.  The best known general bound under this assumption is
\begin{equation}\label{eq:tv2}
d_{TV}(\mathcal{L}(W),\Po(\lambda)) \leq (1 - e^{-\lambda})\left(1-\frac{\var(W)}{\lambda}\right)\,.
\end{equation}
See \citet{dlu}. So, for example, if $W\sim\mbox{Bin}(n,p)$ has a binomial distribution (which does indeed satisfy the assumptions of Theorem \ref{thm:tnd}, 
as we shall see later) then the upper bound in (\ref{eq:tv1}) is $\left(1-e^{-np}\right)\sqrt{p}$, while (\ref{eq:tv2}) gives the upper bound 
$\left(1-e^{-np}\right)p$.

We note also that the bound (\ref{eq:rbound}) is increasing as a function of $R_W$ for $R_W$ lying in $(\var(W),\lambda)$.  Hence, it will typically be sharpest
when we have an upper bound for $R_W$ that is close to $\var(W)$, and in the limit we recover the bound (\ref{eq:tv2}) (under the assumptions of 
Theorem \ref{thm:tnd}).

\section{Negative dependence and the Poincar\'e constant} \label{sec:negdep}

In this section we show that our methods can bound the Poincar\'{e} constant of random 
variables under several well-known
definitions of negative dependence.

\subsection{Total negative dependence}

\begin{definition} \label{def:tnd}
Consider $X_1,\ldots,X_n$ to be Bernoulli random variables and write  $W=X_1+\cdots+X_n$.
If, for each $i=1,\ldots,n$ and all increasing functions $f,g;\mathbb{Z}^+\mapsto\re$, 
$$
\cov(f(X_i),g(W-X_i))\leq0\,,
$$
then 
$X_1,\ldots,X_n$ are said to be totally negatively dependent (TND). 
\end{definition}
 \citet[Lemma 3.1]{pap} show that if 
$W = X_1 + \ldots + X_n$, where $X_1, \ldots, X_n$ are TND then
$W^\star\leq_{st}W+1$.  Using Theorem \ref{thm:tnd} we can deduce:
\begin{corollary} \label{cor:tnd}
If $W$ has mean $\lambda$ and may be written as a sum of TND Bernoulli random variables then $R_W \leq \lambda$.
\end{corollary}

\subsection{Ultra log--concavity}

\begin{definition}
A random variable $W$ with mass function $P_W$ is ultra log-concave of degree $\infty$, or
$\ulci$, if the function $P_W(x) x!$ is log-concave, or equivalently
$$
\rho_W(x) = \frac{ (x+1) P_W(x+1)}{P_W(x)}\mbox{ is non-increasing in $x$}\,.
$$
\end{definition}
This class was introduced by \citet{pemantle} and
\citet{liggett} in order to capture properties of negative dependence.
It is well-known that Poisson random variables and 
the sum of  independent Bernoulli random variables both have the $\ulci$ property.

In the language of stochastic ordering, the $\ulci$ property may be written
$W^\star\leq_{lr}W+1$, where $\leq_{lr}$ represents the likelihood ratio ordering.
Since the likelihood ratio ordering is stronger than the usual stochastic ordering 
\citep[Theorem 1.C.1]{shaked},
it follows that the assumption that $W$ is $\ulci$ is stronger than the assumption
made by Theorem \ref{thm:tnd}.  Hence we deduce the following.
\begin{corollary} \label{cor:ulc}
If $W$ is $\ulci$ with mean $\lambda$, then $R_W\leq\lambda$. \end{corollary}
Note that the ULC condition can be viewed as in the context of the
Bakry-\'{E}mery condition \citep{bakry} for continuous measures, under
which logarithmic Sobolev
inequalities are known to hold, and hence the continuous version of the  
Poincar\'{e} constant is finite (see \citet{ane} for more details).
The Bakry-\'{E}mery condition is known to hold if the relative
density $f/\phi_{1/c}$ is log-concave (in the continuous sense) and ULC requires
that $P_W/\Po(\lambda)$ is log-concave (in the discrete sense).
(Here $\phi_t$ is a normal density with mean $0$ and variance $t$ and
$\Po(\lambda)$ is a Poisson mass function with parameter $\lambda$).

\subsection{Other negative dependence assumptions}

As well as total negative dependence and ultra log--concavity, there are other well-known negative dependence assumptions which
fit into the framework of Theorem \ref{thm:ord}.  For example, we recall the definition of negative association from  
\cite{proschan}.  
\begin{definition} \label{def:na}
Random variables $X_1,\ldots,X_n$ are negatively associated if
$$
\cov \left( f(X_i,i\in\Gamma_1), g(X_i,i\in\Gamma_2) \right)  \leq 0 \,,
$$
for all increasing functions $f,g$, and all $\Gamma_1,\Gamma_2\subseteq\{1,\ldots,n\}$ with
$\Gamma_1\cap\Gamma_2=\emptyset$. 
\end{definition}
Comparison with Definition \ref{def:tnd} shows that negative association implies total negative dependence, meaning
that the TND class is a large and natural class within which our results hold. Since many authors work in terms of
negative association, we remark that our techniques can be applied directly in the NA class. That is,
if $W=X_1+\cdots+X_n$, where $X_1,\ldots,X_n$ are negatively associated, non--negative, integer--valued random variables 
then Lemma 3 of \citet{daly} shows how to construct a random variable
$Z$ such that the assumptions of Theorem \ref{thm:ord} are satisfied.  We refer the reader to that work for further details.

Another form of negative dependence is the following:
\begin{definition} \label{def:nr}
Indicators $X_1, \ldots ,X _n$ are negatively related  (NR) iif
$$ \ep [g(X_1,...X_{i-1},X_{i+1},...X_n)|X_i=1] \leq
\ep [g(X_1,...X_{i-1},X_{i+1},...X_n)],$$  
for all $i=1,\ldots,n$ and all increasing
functions $g$.
\end{definition}

This definition arises naturally in the context of certain urn and graph models -- see for example the P\'{o}lya
sampling example and random graph example \citep[Example 1]{arratia} treated in Section \ref{sec:further}. 
NR is more restrictive than NA in the sense that in the definition of negative association,
the $X_i$ don't need to be indicators. However the following results are standard:
(a) Negatively associated indicators are negatively related \citep[see P30 of][]{bhj}, (b)
Negatively related implies total negative dependence \citep[see Theorem 3.1 of][]{pap}.

\section{Examples and comparison with other results} \label{sec:egs}

We now discuss several straightforward examples, and consider how our results compare with known bounds. 
First we mention a trivial
lower bound, the direct equivalent of Theorem 2(vi) of \citet{borovkov2}:

\begin{lemma} \label{lem:lower}
For any non--negative, integer--valued random variable $W$ with finite variance:
$ R_W \geq \var(W).$
\end{lemma}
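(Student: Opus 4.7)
The plan is to exhibit a single test function $g \in H(X)$ that realizes (or attains) the ratio $\var(X)$; since $R_X$ is a supremum, this immediately gives the desired lower bound.

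The natural candidate is the linear function $g(x) = x - \ep X$. I would first verify that $g \in H(X)$: the centering condition $\sum_x P_X(x) g(x) = 0$ holds by definition of $\ep X$, finiteness of $\sum_x P_X(x) g(x)^2 = \var(X)$ is exactly the hypothesis, and the denominator is nonzero as shown below (so the third membership condition is automatic provided $\var X > 0$; the case $\var X = 0$ yields the trivial bound $R_X \geq 0$).

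The key observation is that $(\Delta g)(x) = g(x+1) - g(x) = 1$ for every $x$, so
\begin{equation*}
\sum_{x=0}^{\infty} P_X(x) (\Delta g)(x)^2 = 1,
\end{equation*}
while the numerator is
\begin{equation*}
\sum_{x=0}^{\infty} P_X(x) g(x)^2 = \sum_{x=0}^{\infty} P_X(x) (x - \ep X)^2 = \var(X).
\end{equation*}
Hence the ratio in Definition \ref{def:pc} equals $\var(X)$ for this particular $g$, and taking the supremum over $H(X)$ gives $R_X \geq \var(X)$.

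There is no real obstacle here; the only thing to be careful of is the degenerate case $\var X = 0$, which is trivial, and the membership check for $H(X)$, which follows directly from the finite-variance hypothesis. The proof is essentially a one-line test-function computation, paralleling the classical continuous fact that equality in a Poincaré inequality is attained by affine functions against a Gaussian-like measure.
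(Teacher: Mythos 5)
Your proposal is correct and uses exactly the same test function $g(x) = x - \ep X$ as the paper's own proof, with the identical computation of numerator and denominator. The extra care you take over membership in $H(X)$ and the degenerate case $\var X = 0$ is a minor (and welcome) refinement of the paper's one-line argument, not a different approach.
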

\begin{proof}
Writing $\lambda = \ep W$,
consider the function $g(x) = x - \lambda$, so that $\Delta g(x) = 1$.
We know that
$\ep[g(W)^2] = \var(W)$ and $\ep[\Delta g(W)^2] = 1$. 
It follows that 
$R_W = \sup_g (\ep g(W)^2)/(\ep (\Delta g)(W)^2) \geq \var(W)$.
\end{proof}

\begin{remark} \label{rem:bdscompatible} Note that Theorem
\ref{thm:tnd} and Lemma \ref{lem:lower} together imply that for 
$W$ such that $W^\star\leq_{st}W+1$ we have $\var(W) \leq R_W \leq \lambda$.
These bounds are compatible, in the sense that for $W$ satisfying this
condition, taking $g(x) = x-1$ in Equation (\ref{eq:altsizebias}), the fact that  $\ep W(W-1) = \ep [W g(W)]
= \lambda \ep g(W^\star)
\leq \lambda \ep g(W+1) = \lambda^2$  . (The inequality follows by the definition of $\leq_{st}$).
Equivalently we write that $\var(W) \leq \lambda$.
\end{remark}

\subsection{Comparison with other results}

Note that our bounds can be contrasted with results such as Propositions 1 and 2 of \citet{miclo},
which give upper and lower bounds on the Poincar\'e constant that differ by a constant multiplicative factor,
as opposed to the additive gap found here. 

In comparing our results with other authors to have considered the discrete
Poincar\'{e} constant, we shall see that other methods
can significantly
overestimate the exact value of the constant.  Using a generalization of Cheeger's inequality, we have the following bound. 

\begin{theorem} \label{thm:cheeger}
Let $W$ be a non--negative, integer--valued random variable with log-concave mass function $P_W$, then
$$ R_W \leq 4\left(\frac{1 - P_W(0)}{P_W(0)}\right)^2\,.$$
\end{theorem}
\begin{proof} 
Theorem 2.1 of \citet{lawler} (a generalization of Cheeger's inequality) states that 
if, there exists $c$ such that the ratio
$$ r(u) := \frac{\sum_{y > u} P_W(y)}{ P_W(u)} \leq c \mbox{\, for all $u \geq 0$},$$
then $R_W \leq 4c^2$. In particular, if $P_W$ is log-concave
(a weaker restriction than ultra log-concave) then rearranging shows that 
$r(u)$ is decreasing in $u$, so 
taking $c = r(0) =  (1-P_W(0))/P_W(0)$, the result holds.
\end{proof}
Note also that other authors, such as \citet{bobkov2}, give necessary and sufficient conditions 
for the discrete Poincar\'{e} constant to be finite
without giving explicit upper bounds on the implied Poincar\'{e} constant.

We first consider the examples of Poisson random variables and sums of independent Bernoulli random variables.  In these
examples it is straightforward to compare the bounds given by our results and the work of other authors.  In subsequent work
we consider examples for which the bound given by Theorem \ref{thm:cheeger} is less straightforward
to evaluate.

\begin{eg}
Let $W\sim\Po(\lambda)$ have a Poisson distribution.  It is straightforward to see that $W^\star$ and $W+1$
are equal in distribution, and so combining Theorem
\ref{thm:tnd} and Lemma \ref{lem:lower} we immediately return the well--known result \citep[see][]{klaasen} that $R_W=\lambda$. 
(Alternatively, we can deduce that $R_W \leq \lambda$ using  Corollary \ref{cor:ulc}
since $W$ is $\ulci$). 

Cheeger's inequality, Theorem \ref{thm:cheeger}, implies that $R_W \leq 4(e^\lambda-1)^2$,
which is clearly far from optimal in the Poisson case.
\end{eg}

\begin{eg}
Let $W=X_1+\cdots+X_n$, where $X_1,\ldots,X_n$ are independent Bernoulli random variables with
$P(X_j=1)=p_j$.  Clearly $X_1,\ldots,X_n$ are 
 TND, and so combining Theorem \ref{thm:tnd} and Lemma \ref{lem:lower}
we have that
$$
\sum_{j=1}^n p_j(1-p_j) \leq R_W \leq \sum_{j=1}^n p_j\,.
$$
 This may be
compared to the upper bound of Theorem \ref{thm:cheeger}. In the binomial case where $p_i \equiv p$,  
this gives $R_W\leq 4((1-p)^n-1)^2$.

We refer the reader also to Section 1.4 of \citet{ane}, where another upper bound is derived, but using
a different definition of the Poincar\'e constant.  In that work, the endpoints of the support of $W$ are 
identified, so their differencing operator is not equal to our operator $\Delta$.  
\end{eg}

\subsection{Further examples and applications} \label{sec:further}

We turn our attention now to some further examples in which we may apply Theorem \ref{thm:tnd}.

\begin{eg}
Let $W$ have a hypergeometric distribution, so that if we distribute $m$ balls into $N$ urns
(each with capacity for up to one ball), $W$ counts the number of the first $n$ of these
urns which are occupied.  We may write $W=X_1+\cdots+X_n$, where $X_j$ is an indicator that
the $j$th urn is occupied.  The random variables $X_1,\ldots,X_n$ are TND:
see \citet[Section 6.1]{bhj}.  Hence, by Theorem \ref{thm:tnd} and Lemma \ref{lem:lower},
$$
\frac{mn(N-n)}{N(N-1)}\left(1-\frac{m}{N}\right) \leq R_W \leq \frac{mn}{N}\,.
$$
In particular, if $m=O(N)$ and $n=O(N)$ then $R_W=O(N)$.  See also \citet{gao}.
\end{eg}

\begin{eg}
Suppose we have $n$ urns into which we distribute $m_n=\lf tn^{1-1/c} \rf$ balls, for some  constants 
$c\in\{2,3,\ldots\}$ and $t>0$.  Let $\mu=t^c/c!$ and let $W$ count the
number of urns with at least $c$ balls.  \citet{pap} show that $W$ may be 
written as a sum of TND Bernoulli random
variables and, furthermore, that $\var(W)\geq\mu+O(n^{-1/c})$ and $\ep W = \mu+O(n^{-1/c})$ -- see
(4.9) and Remark 4.1(a) of \citet{pap}.  Combining these results with our Theorem \ref{thm:tnd} and Lemma \ref{lem:lower}
we have that $R_W=\mu+O(n^{-1/c})$.  
\end{eg}

\begin{eg} 
Consider P\'olya sampling.  We have an urn initially containing $N$ balls of $n$ different colours, with $m_i$ balls of colour $i$.
At each step, we draw a ball, note its colour and return it to the urn together with an additional ball of the same colour.  We repeat for a total
of $r$ draws and let $W$ count the number of colours not drawn during this process.  We write $W=X_1+\cdots+X_n$, where $X_j$ is an indicator that no ball
of colour $j$ is seen during the $r$ draws.  Since $X_1,\ldots,X_n$ are negatively related (see Definition \ref{def:nr}), they
 are also TND: see Section 6.3 of \citet{bhj}.  It is 
straightforward to see that for $j\not=k$
$$
p_j = \ep X_j = \frac{\binom{N-m_j+r-1}{r}}{\binom{N+r-1}{r}}\\, \hspace{20pt}\mbox{and}\hspace{20pt} p_{jk} = \ep[X_jX_k] = \frac{\binom{N-m_j-m_k+r-1}{r}}{\binom{N+r-1}{r}}\\.
$$ 
Thus, using Theorem \ref{thm:tnd} and Lemma \ref{lem:lower} we have that
$$
\sum_{j=1}^np_j(1-p_j) + \sum_{j=1}^n\sum_{k\not=j}(p_{jk}-p_jp_k) \leq R_W \leq \sum_{j=1}^np_j\\.
$$
\end{eg}

\begin{eg} 
We treat Example 1 from \citet{arratia}.  Consider the $n$ dimensional cube $\{0,1\}^n$ with each of the 
$n2^{n-1}$ edges independently assigned one of two directions with equal probability.  Let $W$ count the number
of vertices at which all $n$ incident edges are directed inward.  Then $W=X_1+\cdots+X_{2^n}$, where $X_j$ is an indicator that
vertex $j$ has all its incident edges directed inward.  The random variables $X_1,\ldots,X_{2^n}$ are negatively related
(see Definition \ref{def:nr}), and thus
also TND.  Clearly $\ep X_j=2^{-n}$ and  
\begin{equation*}
 \ep[X_jX_k] = \left\{
\begin{array}{ll}
0 & \mbox{ if vertices $j$ and $k$ share a common edge,} \\
\ep X_j\ep X_k & \mbox{ otherwise,} \\
\end{array} \right.  \end{equation*}
for $j\not=k$.  Hence, $\ep W=1$, $\mbox{Var}(W)=1-(2n+1)2^{-n}$ and, by Theorem \ref{thm:tnd} and Lemma \ref{lem:lower},
$1-(2n+1)2^{-n}\leq R_W\leq1$.  
\end{eg}

\begin{eg}
Suppose we distribute $n$ points uniformly on the circumference of a circle of radius
$(2\pi)^{-1}$.  Let $S_1,\ldots,S_n$ be the arc--length distances between successive points and define
$X_j=I(S_j<a)$ for some $a>0$, the indicator that $S_j$ falls below the threshold $a$.  
Then $X_1,\ldots,X_n$ are negatively related and thus TND \citep[see Section 7.1 of][]{bhj}.  
From calculations by \citet[Section 7.2]{bhj} we have that
$\ep W=n(1-(1-a)^{n-1})$ and $\mbox{Var}(W)\geq(1-2na)\ep W$.  Hence, by
Theorem \ref{thm:tnd} and Lemma \ref{lem:lower},
$$
(1-2na)\ep W \leq R_W \leq \ep W\,.
$$
Corollary 7.B.1(a) of \citet{bhj} shows that if $\lim\inf\ep W>0$, the distribution of $W$ converges to
that of a Poisson random variable if and only if $na\rightarrow0$.  This is closely related to the bounds
we have obtained above on $R_W$.  This connection will be further explored in Section \ref{sec:pois}.
\end{eg}

\section{Poisson convergence and the Poincar\'e constant} \label{sec:pois}

\citet{prakasa} show that the property $R_W = \var(W)$ characterizes the
Poisson distribution (up to integer shifts). Indeed, if $\pr(W = 0) > 0$,
then Theorem 1 of \citet{prakasa} shows that
$R_W = \var(W)$ implies that $W$ is Poisson distributed,
and hence we may write $R_W = \var(W) = \ep W$.  We will show here how
closeness of $R_W$ to $\var(W)$ and $\ep W$
implies closeness of $W$ to a Poisson random variable, noting that we do indeed need
both of these conditions to guarantee that we are close to a Poisson distribution (rather than a shifted
Poisson distribution). Our work is motivated by that of \citet{utev} relating analogous
characterisations for the
normal and Poisson distributions to results in convergence to those
distributions.
The results of this section serve to illustrate the benefit of sharp bounds
on the Poincar\'e constant.

We begin by proving Theorem \ref{thm:bdtv}. Many of the definitions and equalities used in the proof are drawn from Stein's method for Poisson approximation.  
See \citet{bhj} for an introduction to these ideas.   

\begin{proof}{{\bf of Theorem \ref{thm:bdtv}}}
For any non--negative, integer--valued random variable $W$ and $g\in\mathcal{G}(W)$ we have, by the definition of $R_W$ and since $\ep g(W)=0$,
\begin{equation}\label{pf3a}
\mbox{Var}(g(W)) \leq R_W \ep[\Delta g(W)^2]\,,
\end{equation}
Writing $\lambda$ for $\ep W$, we apply this with the choice 
\begin{equation} \label{pf3b}
g(x) = x + yf_A(x) - \lambda - y\ep f_A(W)\,,
\end{equation}
where $y\in\re$ will be determined later, $A\subseteq\Z ^+$ and $f_A:\Z ^+\mapsto\re$ solves
the Chen-Stein equation
$$
I(x\in A) - P(\Po(\lambda)\in A) =  \lambda f_A(x+1)-xf_A(x)\,,
$$
so that 
\begin{equation}\label{pf3c}
d_{TV}(\mathcal{L}(W),\Po(\lambda))=\sup_{A\subseteq\Z ^+}| \lambda \ep f_A(W+1)-\ep[Wf_A(W)]|\,.
\end{equation}  
We will need the property that
\begin{equation}\label{pf3d}
\sup_{A\subseteq\Z ^+}\sup_{x\in\Z^+}|\Delta f_A(x)| \leq \frac{1 - e^{-\lambda}}{\lambda}\,,
\end{equation} 
see \citet[Lemma 1.1.1]{bhj} and \citet[Equation (1.4)]{pap}.

Now, applying (\ref{pf3a}) with the choice (\ref{pf3b}) gives us that $\alpha y^2 +2\beta y +\gamma\geq0$, where
\begin{eqnarray*}
\alpha &=& R_W\ep[\Delta f_A(W)^2] - \var (f_A(W))\,,\\
\beta &=& R_W\ep[\Delta f_A(W)]-\ep[Wf_A(W)] + \lambda \ep f_A(W)\,,\\
\gamma &=& R_W-\var (W)\,.
\end{eqnarray*}
Since this quadratic function can have at most one real root, $|\beta|\leq\sqrt{\alpha\gamma}$.  That is
\begin{eqnarray*}
\lefteqn{ |(\ep W)\ep f_A(W+1) - \ep[Wf_A(W)] + (R_W-\ep W)\ep\Delta f_A(W)| } \\
& \leq & \sqrt{R_W-\var (W)}\sqrt{R_W\ep[\Delta f_A(W)^2]-\var (f_A(W))}\\ 
& \leq & \sqrt{R_W-\var (W)} \sqrt{R_W} \frac{(1 - e^{-\lambda})}{\lambda} \,,
\end{eqnarray*}
where this last inequality follows from (\ref{pf3d}).  Combining this with the triangle inequality, (\ref{pf3c}) and (\ref{pf3d}) we obtain Theorem \ref{thm:bdtv}.
\end{proof}

Note that we can treat the RHS of Equation (\ref{eq:rbound}) as a function
of $R_W$. When $\var(W) \leq R_W \leq \lambda$ 
(for example when $W^\star\leq_{st}W+1$, as discussed in Remark 
\ref{rem:bdscompatible}), this function is increasing in $R_W$ over this range
(since it is concave and increasing at $R_W = \lambda$). Hence, providing
tighter bounds from above on $R_W$ would give tighter bounds on the 
rate of Poisson convergence.

In view of Corollary \ref{cor:tnd} and Corollary \ref{cor:ulc} we obtain the following:
\begin{corollary}
Let $\{W_n:n\geq1\}$ be non--negative, integer--valued random variables such that 
 $\lim_{n\rightarrow\infty}|\ep W_n - \var (W_n)|=0$.
Suppose that any of the following three conditions hold:
\begin{enumerate}
 \item \label{itm:cond1} $\lim\sup_n R_{W_n}<\infty$ and
$\lim_{n\rightarrow\infty}|R_{W_n}-\var (W_n)|=0$,
 \item \label{itm:cond2} $\lim\sup_n \ep W_n < \infty$ and  for each $n$, 
$W_n$ may be written as a sum of TND Bernoulli random variables; or
 \item \label{itm:cond3} $\lim\sup_n \ep W_n < \infty$ and  for each $n$, $W_n$ is $\ulci$.
\end{enumerate}
Then $d_{TV}(\mathcal{L}(W_n) , \Po (\ep W_n))\rightarrow0$ as $n\rightarrow\infty$.
\end{corollary}
\begin{proof} Note that since $f(t) = (1- e^{-t})/t$ is decreasing in $t$, we can bound it by
$f(t) \leq \lim_{t \rightarrow 0} f(t) = 1$. Hence in Theorem \ref{thm:bdtv}, we need only
show that the term in braces converges to zero. Under Condition \ref{itm:cond1} this
is automatic. Further, note that Conditions \ref{itm:cond2} and \ref{itm:cond3} each imply
that $\var(W_n) \leq R_{W_n} \leq \ep W_n$.  This follows from Lemma \ref{lem:lower} and from Corollary \ref{cor:tnd}  and
\ref{cor:ulc} respectively.  Thus, by a sandwich argument, we know that Conditions
\ref{itm:cond2} and \ref{itm:cond3} each imply Condition \ref{itm:cond1}.
\end{proof} 

\section{Proof of Theorem \ref{thm:tnd}} \label{sec:proofs}

In this section we prove the following result, which gives Theorem \ref{thm:tnd} as an immediate corollary.
\begin{theorem} \label{thm:ord}
Let $W$ be a non--negative, integer--valued random variable with mean $\lambda$, and let $Z\geq1$ 
be a random variable defined on the same space as $W$ 
 such that $W^\star\leq_{st}W+Z$.  Then for any $g\in\mathcal{G}(W)$, 
$$
 \var \; g(W) = \ep[g(W)^2] \leq \lambda \sum_{j=0}^\infty \Delta g(j)^2 P(j-Z<W\leq j)\,.
$$
\end{theorem}
Our proof will make use of Klaassen's kernel function, given by equation (2.17) of \citet{klaasen}:
\begin{equation} \label{eq:klaker}
\chi(i,j) =  I( \lf x_0 \rf \leq j < i) - I( i \leq j < \lf x_0 \rf) - (x_0 - \lf x_0 \rf) I( j = \lf x_0 \rf)\,,
\end{equation}
for some $x_0\in\re$.  We begin with the following lemma.

\begin{lemma} \label{lem:pf1}
Let $W$ be a non--negative, integer--valued random variable.  Then for any $g\in\mathcal{G}(W)$
and any $x_0 \in \re$,
\begin{equation}
\label{eq:doublesum}
 \ep[g(W)^2] \leq \sum_{j=0}^\infty \Delta g(j)^2 \sum_{i=0}^\infty (i-x_0)P(W=i)\chi(i,j) \,.
\end{equation}
\end{lemma}
\begin{proof}
For any given integer $i$, by considering the cases
$\{ i < x_0 \}$, $\{ i > x_0 \}$ and $\{ i = x_0 \}$ separately,
 we deduce that for any function $h$:
\begin{equation*}
 \sum_{j=0}^{\infty} \chi(i,j) h(j) = \left\{
\begin{array}{ll}
- \sum_{j=i}^{\lf x_0 \rf-1} h(j) + (\lf x_0 \rf - x_0) h( \lf x_0 \rf) & \mbox{ for $i < \lf x_0 \rf$,} \\
(\lf x_0 \rf - x_0) h( \lf x_0 \rf) & \mbox{ for $i = \lf x_0 \rf$,} \\
\sum_{\lf x_0 \rf}^{i-1} h(j) + (\lf x_0 \rf - x_0) h( \lf x_0 \rf) & \mbox{ for $i > x_0$.} \\
\end{array} \right.  \end{equation*}
Taking $h \equiv \Delta g$ we deduce that
$\sum_{j=0}^{\infty} \chi(i,j) \Delta g(j) = g(i) - g^*$, where $g^* = g( \lf x_0 \rf) +
\Delta g(\lf x_0 \rf)( x_0 - \lf x_0 \rf)$. In particular, taking
$h(j) \equiv 1$ we deduce that
$ \sum_{j=0}^{\infty} \chi(i,j) = (i-x_0)$.
Observe that by the Cauchy-Schwarz inequality this means that
\begin{eqnarray} 
\left( g(i) - g^* \right)^2 & = &
\left( \sum_{j=0}^{\infty} \chi(i,j) \Delta g(j) \right)^2
\leq \left( \sum_{j=0}^{\infty} \chi(i,j)  \right) \left( \sum_{j=0}^{\infty} \chi(i,j) \Delta g(j)^2 \right) \nonumber \\
& = & (i- x_0) \left( \sum_{j=0}^{\infty} \chi(i,j) \Delta g(j)^2 \right). \label{eq:cs} 
\end{eqnarray}
Although $\chi(i,j)$ is a signed measure on $j$, the use of the Cauchy--Schwarz inequality is justified since
$\chi(i,j)$ has constant sign for any given $i$.  If $i\geq\lf x_0\rf$ then $\chi(i,j)\geq0$ for all $j$,
and otherwise $\chi(i,j)\leq0$ for all $j$.

The lemma follows on combining (\ref{eq:cs}) with the observation that for all $g\in\mathcal{G}(W)$
$$
\ep[g(W)^2] \leq  \sum_{i=0}^{\infty} P(W=i) \left( g(i) - g^*\right)^2\,,
$$
and reversing the order of summation in the resulting expression.
\end{proof}

Theorem \ref{thm:ord} follows immediately from Lemma \ref{lem:pf1}.  To see this, choose $x_0=\ep W=\lambda$.
Then, using the definition of the size--bias transform (\ref{eq:altsizebias}), for a fixed $j \in \Z$
the inner sum in Equation (\ref{eq:doublesum})
can be expressed as
\begin{eqnarray}
\ep W \chi(W,j) - \lambda \ep \chi(W,j) & = & \lambda \left( \ep \chi( W^*, j) - \ep \chi(W,j) \right) 
\nonumber \\
& \leq & \lambda \ep \left( \chi(W+Z,j) - \chi(W,j) \right), \label{eq:manip}
\end{eqnarray}
using the stochastic ordering assumption of Theorem \ref{thm:ord}, and the fact that 
$\chi(i,j)$ is increasing in $i$ for fixed $j$. Using (\ref{eq:klaker}), and assuming that
$j \geq \lf x_0 \rf$ first, we observe that for any $w,z \in \Z$,
\begin{equation} \label{eq:kerdiff}
\chi( w+z, j) - \chi(w,j) = I(j < w+z) - I(j < w) = I( w \leq j < w+z). \end{equation}
Similarly, Equation (\ref{eq:kerdiff}) also holds in the case $j < \lf x_0 \rf$. Substituting
this in (\ref{eq:manip}) we obtain $\lambda \pr(W \leq j< W +Z) = \lambda \pr( j - Z < W \leq j)$
as required to complete the proof.

\section*{Acknowledgement}

We wish to thank anonymous referees for pointing us towards useful references, and for comments that helped improve the presentation of this work.

\end{document}